\newtheorem{thm}{Theorem}
\newtheorem{prop}[thm]{Proposition}
\newtheorem{lem}[thm]{Lemma}
\theoremstyle{definition}
\newtheorem{defn}[thm]{Definition}
\newtheorem{rem}[thm]{Remark}
\title{Knot exteriors with all possible meridional essential surfaces}
\author{Jo\~{a}o M. Nogueira}
\begin{document}

\begin{abstract}
We show the existence of infinitely many knot exteriors where each of which contains meridional essential surfaces of any genus and (even) number of boundary components. That is, the compact surfaces that have a meridional essential embedding into a knot exterior have a meridional essential embedding into each of these knot exteriors. From these results, we also prove the existence of a hyperbolic knot exterior in some 3-manifold for which there are meridional essential surfaces of independently unbounded genus and number of boundary components.

\end{abstract}

\maketitle

\section{Introduction}
Surfaces have an important role in the understanding of $3$-manifold topology.
This paper is concerned with the interesting phenomenon of certain knot exteriors having essential surfaces of arbitrarily large Euler characteristics. The first examples of knots with this property were given by Lyon \cite{Lyon}, where he proves the existence of knot exteriors where each of which has closed essential surfaces of arbitrarily high genus. Other examples were later obtained, for instance, by Oertel \cite{Oertel}, and, more recently, by Li \cite{Li} or by Eudave-Mu\~noz and Neumann-Coto \cite{Munoz-Coto}. Similarly, the author proved in \cite{Nogueira1} the existence of prime knot exteriors such that each contains meridional essential surfaces with two boundary components and arbitrarily high genus. On the other hand, one might wonder if the unbounded Euler characteristics of essential surfaces in a knot exterior can be from the number of boundary components instead of the genus. That is, is there a knot exterior with compact essential surfaces with arbitrarily many boundary components? This is in fact the case, as shown by the examples given by the author \cite{Nogueira2}, with meridional essential planar surfaces. The problem addressed in this paper is whether the arbitrarily large Euler characteristic can be obtained from independently unbounded genus and number of boundary components. Theorem \ref{main} answers this question affirmatively.    
\begin{thm}\label{main}
There are infinitely many knots each of which has in its exterior meridional essential surfaces of any genus and $2n$ boundary components for any $n\geq 1$. Moreover, the collection can be made of prime knots, naturally excluding the existence of meridional essential annuli in their exteriors.   
\end{thm}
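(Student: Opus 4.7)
The plan is to construct the desired knots by assembling tangles that combine the building blocks behind \cite{Nogueira1} and \cite{Nogueira2}. I would fix a model tangle $(B_0,T_0)$ in a $3$-ball built by stacking two kinds of elementary tangle pieces: ``boundary-adding'' pieces adapted from \cite{Nogueira2} which introduce additional meridional boundary circles to an essential surface, and ``handle-adding'' pieces adapted from \cite{Nogueira1} which increase its genus by one. By inserting enough blocks of each type, I would arrange that for every $g\geq 0$ and every $n\geq 1$ there is a properly embedded meridional essential surface $F_{g,n}$ in $B_0\setminus T_0$ of genus $g$ with $2n$ boundary meridians, obtained as a prescribed ``cut-out'' through the universal pattern.

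I would then close this tangle with an outer tangle $(B',T')$ chosen so that $T_0\cup T'$ is a single knot $K$ in $S^3$, and so that each $F_{g,n}$ extends to a closed meridional essential surface $S_{g,n}$ in the exterior $E(K)$ of the prescribed topological type. The closing tangle is taken from an infinite family, parametrised for instance by a rational slope, in order to obtain infinitely many distinct knots. Primality is arranged by choosing this closure so that it carries no obvious sphere or annulus obstruction.

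The technical heart is to verify that each $S_{g,n}$ is essential in $E(K)$ and that $K$ is prime with no meridional essential annulus in its exterior. Given a hypothetical compressing disk or boundary-parallelism for $S_{g,n}$, I would place it in general position with the decomposition spheres of the tangle assembly and run a standard innermost-disk/outermost-arc reduction; because each building-block surface is essential in its own tangle (by the properties used in \cite{Nogueira1} and \cite{Nogueira2}), the reduction leads to a contradiction. Primality and the absence of meridional essential annuli are addressed in the same framework, by analysing how an essential meridional twice-punctured sphere or annulus in $E(K)$ must intersect the decomposition spheres. The main obstacle will be the \emph{global} control of compressing disks: unlike in the previous papers where essentially one decomposition sphere is present, here a hypothetical compressing disk may meet many such spheres simultaneously, so the key technical point is to show that the innermost reductions do not increase complexity, allowing one to isolate a contradiction inside a single building block. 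Once this is in hand, varying the closing parameter produces the desired infinite family of prime knots, proving Theorem~\ref{main}.
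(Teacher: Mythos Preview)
Your plan differs substantially from the paper's and, as written, leaves the central difficulty unresolved.

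The paper does not stack tangle blocks. Instead it defines a single satellite-type operation $h$ whose pattern sits inside a genus-two handlebody-knot (the handlebody-knot $4_1$). Starting from a knot $C$ that already has meridional essential surfaces of every genus with two boundary components (the knots of \cite{Nogueira1}), the paper shows: (i) $h(C)$ is prime (Proposition~\ref{prime}); (ii) $h(C)$ retains genus-$g$, two-boundary surfaces for all $g\ge1$ (Proposition~\ref{two b}); and (iii) from any genus-$g$, $n$-boundary surface in $E(K)$ one obtains in $E(h(K))$ genus-$g$ surfaces with any even number $b\ge2n$ of boundaries (Proposition~\ref{boundaries}). The extra boundary components are created not by additional tangle pieces but by capping the boundaries of the companion surface with two kinds of meridian disks of the pattern solid torus (``type-$1$'' and ``type-$2$'') and then tubing along an annulus $A$ guided by the handlebody-knot. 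Infinitely many examples come from injectivity of $h$ (Remark~\ref{infinite}), not from a closing parameter. For essentiality the paper does use innermost/outermost arguments in the easy range $n\le b\le 2n$ (Lemma~\ref{i<n}), but for the surfaces with more boundary components it abandons that method entirely and instead builds a single incompressible branched surface carrying all of them with positive weights, invoking the Floyd--Oertel theorem (Lemma~\ref{i>n}).

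This last point is exactly where your proposal is thin. You correctly flag that a hypothetical compressing disk may cross many decomposition spheres and that one must show the innermost reductions do not increase complexity, but you offer no mechanism for this; you simply restate the problem. The paper's deliberate switch to branched-surface theory for the $i>n$ case is a strong signal that a bare innermost/outermost argument is awkward here: once the surface wraps arbitrarily many times through the pattern, a putative compressing disk can meet the decomposing surfaces in arbitrarily many pieces, and there is no evident lexicographic complexity that a block-by-block reduction decreases. A second, more structural, gap is that ``inserting enough blocks of each type'' cannot mean arbitrarily many blocks in a single knot; you need a mechanism---like the type-$1$/type-$2$ meridians and the annulus $A$ in the paper, or an explicit branched surface---by which a \emph{fixed} finite configuration produces surfaces of every pair $(g,n)$. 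Until you specify that mechanism and either carry out the global innermost reduction in detail or replace it with a branched-surface certificate, the proposal is a plausible outline but not yet a proof.
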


\noindent Note that only the compact surfaces with an odd number of boundary components cannot be embedded as meridional essential surfaces into any knot exterior in $S^3$. In this regard, the knots in the statement of Theorem \ref{main} have in their exteriors all possible compact surfaces embedded as meridional essential surfaces.\\
Each knot exterior of Theorem \ref{main} also has closed essential surfaces of unbounded genus. In fact, from \cite{CGLS}, at least one swallow-follow surface obtained  
from each meridional essential surface, with two boundary components, in Theorem \ref{main} is of higher genus and also essential in the exterior of the respective knot.\\
There are many examples of knots that do not have the properties as in the theorems above, with the most obvious among these being small or meridionally small knots. Well known examples of classes of small knots are the torus knots, the $2$-bridge knots \cite{Hatcher-Thurston}, and Montesinos knots with length three \cite{Oertel2}, among other examples. One particularly interesting result, in contrast with the theorems in this paper, is one by Menasco \cite{Menasco} stating that for a fixed number of boundary components there are finitely many meridional essential surfaces in the exterior of a prime alternating link. In particular, for a fixed number of boundary components there is a bound on the genus for meridional essential surfaces. Hence, the knots of Theorem \ref{main} are not alternating.

\noindent In Theorem \ref{hyperbolic}, we show that hyperbolicity of the ambient space is not an obstruction to the existence of meridional essential surfaces of independently unbounded genus and number of boundary components in a $3$-manifold with torus boundary.
 
\begin{thm}\label{hyperbolic}
There are infinitely many hyperbolic $3$-manifolds with torus boundary each of which has meridional essential surfaces of independently unbounded genus and number of boundary components. 
\end{thm}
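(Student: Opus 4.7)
My plan is to deduce Theorem \ref{hyperbolic} from Theorem \ref{main} by a Dehn surgery construction that transforms the knot exteriors provided by Theorem \ref{main} into hyperbolic $3$-manifolds while preserving the full family of meridional essential surfaces. Let $K$ be one of the prime knots from Theorem \ref{main}, with meridional essential surfaces $\{F_{g,n}\}_{g\ge 0,\,n\ge 1}$, where $F_{g,n}$ has genus $g$ and $2n$ boundary components. The first step is to choose a simple closed curve $c\subset E(K)$ enjoying two properties: (i) $c$ is disjoint, up to isotopy, from every $F_{g,n}$, which should be achievable by placing $c$ in a region of $E(K)$ not used by the construction of the surfaces in Theorem \ref{main}; and (ii) $E(K\cup c)$ admits a complete hyperbolic structure of finite volume, which can be arranged by choosing $c$ to pierce the essential tori of the JSJ decomposition of $E(K)$ and by ruling out essential annuli in the resulting link complement. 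An equivalent alternative would be to pass directly to the JSJ piece $P\subset E(K)$ containing $\partial N(K)$ together with all of the $F_{g,n}$, and to verify that $P$ is hyperbolic.

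Next, I would invoke Thurston's hyperbolic Dehn surgery theorem applied to the cusp of $X:=E(K\cup c)$ coming from $c$: for all but finitely many slopes $\alpha$ on $\partial N(c)$, the filling $M_\alpha := X(\alpha)$ is hyperbolic with a single torus boundary component $\partial N(K)$, on which the meridian $\mu_K$ is still a distinguished slope. Each $F_{g,n}$ sits unchanged in $M_\alpha$, with its boundary on $\partial N(K)$ of slope $\mu_K$, hence it is meridional. Varying $K$ over the infinite family supplied by Theorem \ref{main} then produces infinitely many pairwise distinct hyperbolic $3$-manifolds $M_\alpha$ containing the desired meridional essential surfaces of independently unbounded genus and number of boundary components.

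The main obstacle is the \emph{simultaneous} persistence of the entire infinite family $\{F_{g,n}\}$ as essential surfaces in $M_\alpha$: a generic Dehn surgery theorem excludes only finitely many slopes per surface, and one cannot a priori control the union of these exceptional sets over the whole family. My plan to overcome this is to exploit that every $F_{g,n}$ is disjoint from a regular neighbourhood of $\partial N(c)$ in $X$: given any compressing or $\partial$-parallelism disk for $F_{g,n}$ in $M_\alpha$, transversality with the filling meridian disk followed by an innermost-disk reduction either yields such a disk in $X$ (contradicting essentiality, which is inherited from $E(K)$) or produces a disk lying entirely in the filling solid torus (contradicting disjointness from $F_{g,n}$). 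The careful verification that this argument runs uniformly in $\alpha$ and in $(g,n)$ is the technical heart of the proof, and relies on the explicit geometric placement of the surfaces coming from the construction behind Theorem \ref{main}.
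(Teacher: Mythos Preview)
Your approach has a genuine gap at its very first step. Requiring the curve $c$ to be simultaneously disjoint (up to isotopy) from \emph{all} of the infinitely many surfaces $F_{g,n}$ is the crux, and you give no argument for why such a $c$ exists; the phrase ``placing $c$ in a region of $E(K)$ not used by the construction'' is not substantiated. In fact, for the knots of Theorem~\ref{main} the surfaces producing arbitrarily many boundary components (Proposition~\ref{boundaries}) intersect the companion torus $\partial h_K(P)$ essentially, while the genus-$g$, two-boundary surfaces (Proposition~\ref{two b}) live inside $h_K(\Gamma)$; the union of all these surfaces leaves no obvious nontrivial region of $E(K)$ untouched. Worse, property (i) is in tension with property (ii): to make $E(K\cup c)$ hyperbolic you explicitly want $c$ to cross the JSJ tori of $E(K)$, but those tori (e.g.\ $\partial h_K(P)$) are precisely where the meridional surfaces sit. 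Your alternative---passing to the JSJ piece $P$ containing $\partial N(K)$---fails for the same reason: the surfaces $F_{g,n}$ are not contained in a single JSJ piece. And without disjointness, the innermost-disk persistence argument you sketch does not even get started; you yourself note that the exceptional-slope sets could be infinite in aggregate, and ``explicit geometric placement'' does not resolve this.

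The paper takes a different route that sidesteps the disjointness issue entirely. One chooses (via Myers) a null-homotopic hyperbolic knot $J\subset E(K)$ with no relation to the surfaces whatsoever, performs $\tfrac{1}{r}$-filling on $J$ to obtain a hyperbolic knot $K_r$ in some $3$-manifold $M_r$, and then uses the degree-one map $f\colon E(K_r)\to E(K)$ of Boileau--Wang. The surfaces in $E(K_r)$ are not the original $S_{g;b}$ but their transverse preimages $f^{-1}(S_{g;b})$; Edmonds' theorem (degree-one maps of surfaces are pinches) shows these have genus $\ge g$ and exactly $b$ boundary components, and a Loop-Theorem argument shows they can be compressed to essential surfaces without losing this. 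Thus one obtains surfaces of independently unbounded genus and number of boundary components without ever needing a curve disjoint from the whole family.
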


\noindent The paper is organized as follows: In section 2 of this paper we present a construction of knots used in the paper and prove some of their properties. For the construction we use satellite knots together with handlebody-knots of genus two. In section 3 we show a process to obtain knot exteriors with meridional essential surfaces of arbitrarily many boundary components as in Proposition \ref{boundaries}, and use the knots from the main theorem of \cite{Nogueira1} to prove Theorem \ref{main}. The main methods are classical in $3$-manifold topology; we use innermost curve arguments and branched surface theory. In section 4 we prove Theorem \ref{hyperbolic} using classical results in hyperbolic manifolds and degree-one maps. Throughout the paper we work in the smooth category, all knots are assumed to be in $S^3$, unless otherwise stated, and all (sub)manifolds are assumed to be orientable and in general position.

\section{A construction of knots.}

A common method to construct knots is through the construction of satellite knots. We start considering a knot $K_p$ in a solid torus $T$, that we refer to as the \textit{pattern knot}. The solid torus $T$ is embedded in $S^3$ by the map $\sigma: T\rightarrow S^3$ where the core of $\sigma(T)$ has image a knot $K_c$ that is called the \textit{companion knot}. The knot $\sigma(K_p)$ is called a \textit{satellite knot} of $K_c$ with pattern $K_p$. In this paper we consider the concept of satellite knot allowing the companion to be a \textit{handlebody-knot}, that is an embedded handlebody of genus $g$ in $S^3$. A \textit{spine} $\gamma$ of a handlebody-knot $\Gamma$ is a graph embedded in $S^3$ with $\Gamma$ a regular neighborhood.
In this section, we describe a method to construct a knot exterior with meridional essential surfaces with an arbitrarily high number of boundary components, that we will use to prove Theorem \ref{main}. The method consists of defining a specific knot used as the pattern for a satellite operation function.\\

\noindent Let $J$ be a prime knot as in the main theorem of \cite{Nogueira1}, that is with meridional essential surfaces of any positive genus and two boundary components. The knot $J$ is obtained by identifying the boundaries of two particular solid tori, say $H_1$ and $H_2$, attaching meridian to longitude, and by identifying the boundaries of the respective essential arc each contains. Denote by $X$ the torus obtained from the identified boundaries of these solid tori and by $O$ a disk in $X$ containing $X\cap J$. We isotope two copies of $X-O$ slightly to each side separated by $X$ and denote by $X_1$ and $X_2$ the resulting copies of $X$. The tori $X_1$ and $X_2$ intersect at $O$ and each bounds a solid torus, ambient isotopic to $H_1$ and $H_2$ respectively. The union of these solid tori along $O$ defines a genus two handlebody-knot $H$ with spine as in Figure \ref{H}.

\begin{figure}[htbp]
\labellist
\small \hair 2pt
\pinlabel $O$ at 36 6
\endlabellist
\centering
\includegraphics[width=0.2\textwidth]{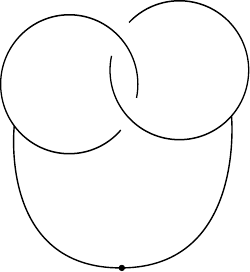}
\caption{: The spine of the handlebody-knot $H$, and the respective representation of $O$ by a point.}
\label{H}
\end{figure}

\noindent Consider a ball $B$ disjoint from $O$ such that $B^c$ intersects $H$ at a cylinder containing $O$ and two parallel trivial arcs from $J$. Note that the $2$-string tangle $(B, B\cap J)$ is essential, otherwise the punctured torus obtained from $X$ wouldn't be essential in $E(J)$. Denote by $T$ the solid torus defined by $B\cup (B^c\cap H)$. Let $J_1$ and $J_2$ be two copies of $J$ in the respective copies of $T$, say $T_1$ and $T_2$. We isotope the two arcs of $J_i\cap (T_i-B_i)$ into the boundary of $T_i$, where $B_i$ is the copy of $B$ with respect to $T_i$. For each knot $J_i$, we consider a segment of one of these arcs and a regular neighborhood $R_i$ of it, disjoint from $J_i$ otherwise. We proceed with a connect sum of $J_1$ and $J_2$ by removing the interior of $R_1$ and attaching the exterior of $R_2$, such that the disks $T_1\cap \partial R_1$ and $T_2\cap \partial R_2$ are identified. Hence, the knot $J_1\# J_2$ is in a genus two handlebody $G$ obtained by gluing $T_1$ and $T_2$ along a disk $D$ in their boundaries. (See Figure \ref{G2}.)
\begin{figure}[htbp]
\labellist
\tiny \hair 2pt
\pinlabel $B_1$ at 8 47
\pinlabel $T_1$ at 45 56
\pinlabel $B_2$ at 269 47
\pinlabel $T_2$ at 233 56
\pinlabel $D$ at 138 49
\pinlabel $J_1$ at 107 74
\pinlabel $J_2$ at 170 74
\endlabellist
\centering
\includegraphics[width=0.6\textwidth]{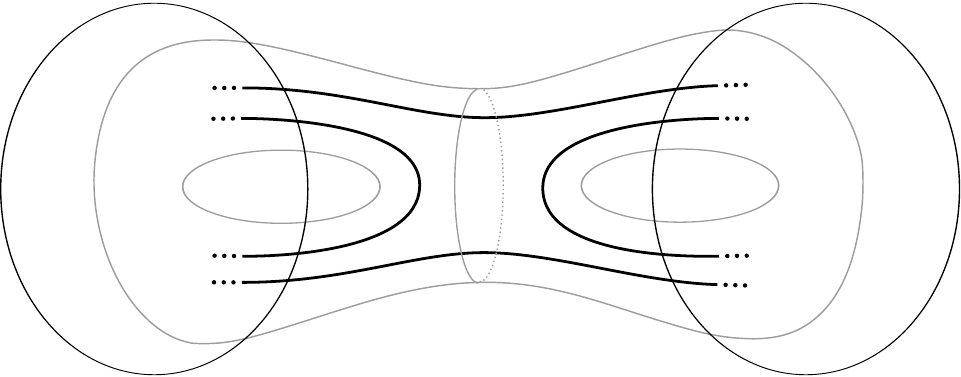}
\caption{: The handlebody $G$ together with the knot $J_1\# J_2$.}
\label{G2}
\end{figure}
As the tangle $(B, B\cap J)$ is essential and $T\cap B^c$ is a regular neighborhood of each arc of $B^c\cap J$, we have that $\partial T$ is essential in $T-J$. Moreover, from the construction of $T$ and $J$, each meridian of $T$ intersects $J$ at least twice. Hence, $\partial G$ is essential in $G-J_1\# J_2$ and, similarly, each essential disk in $G$ intersects $J_1\# J_2$ at least at two points.\\

\noindent Let $\Gamma$ be the genus $2$ handlebody-knot $4_1$, from the list in \cite{hknot}, with spine $\gamma$ as in Figure \ref{Hknot41}.
\begin{figure}[htbp]
\centering
\includegraphics[width=0.4\textwidth]{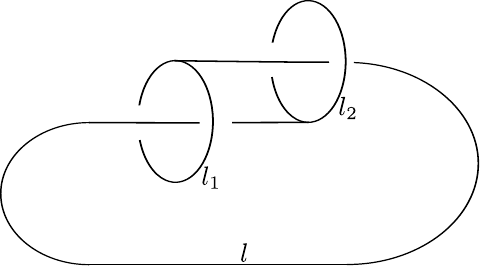}
\caption{: The spine $\gamma$ of the handlebody-knot $\Gamma$, defined by the loops $l_1$ and $l_2$ and the arc $l$.}
\label{Hknot41}
\end{figure}
Denote by $e: G\rightarrow S^3$ an embedding of $G$ into $S^3$ with image $\Gamma$, where $e(D)$ is an essential disk in a regular neighborhood $L$ of $l$.  That is, $e(D)$ is a disk that separates from $\Gamma$ two tori, $L_1$ and $L_2$, having cores $l_1$ and $l_2$, respectively, with $\Gamma=L\cup L_1\cup L_2$. (See Figure \ref{Hknot41}.) We refer to $e(J_1\# J_2)$ by $N$. The handlebody knot $\Gamma$ is embedded in a solid torus $P$ with core a trivial knot, such that there is a meridian disk of $P$ that intersects $\gamma$ at a single point in $l$. In the next definition we describe the operation used to prove Theorem \ref{main}.\\

\begin{defn}\label{satellite}
Let $\mathcal{K}$ be the set of equivalence classes of knots in $S^3$ up to ambient isotopy. For a knot $K\in \mathcal{K}$ let $h_K:P\rightarrow S^3$ be an embedding of $S^3$ such that $h_K(P)$ is a solid torus with core $K$. We define the satellite operation function $h:\mathcal{K}\rightarrow \mathcal{K}$ such that for each $K\in \mathcal{K}$ we have $h(K)=h_K(N)$.
\end{defn}

\begin{rem}\label{infinite}
The function $h$ is injective. That is, if $K_1$ and $K_2$ are distinct knots then $h(K_1)$ and $h(K_2)$ are also distinct knots. In fact, consider the tori $Y_i=\partial h_{K_i}(P)$, $i=1, 2$. The component cut by $Y_i$ from $E(h(K_i))$ containing the boundary torus of $E(h(K_i))$ is topologically the same for both knots $K_1$ and $K_2$. Hence, from the unicity of minimal $JSJ$-decompositions of compact $3$-manifolds, if two knots $h(K_1)$ and $h(K_2)$ are ambient isotopic, the tori $Y_1$ and $Y_2$ are also ambient isotopic, contradicting $K_1$ and $K_2$ being distinct. 
\end{rem}

\begin{prop}\label{prime}
For every knot $K$ the knot $h(K)$ is prime.
\end{prop}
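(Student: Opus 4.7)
The plan is a proof by contradiction. Suppose $h(K)$ is composite, so there is a 2-sphere $S\subset S^3$ meeting $h(K)$ transversally in two points and yielding a non-trivial decomposition. I will push $S$ successively into the satellite handlebody $\Sigma := h_K(\Gamma)$, then onto one side of the separating disk $D':= h_K(e(D))$, and finally derive a contradiction using the primeness of $J$ together with the meridional property that every meridian of $T$ meets $J$ at least twice.

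First I establish that $\partial\Sigma$ is incompressible in $E(h(K))$. On the inside this is immediate from $\partial\Gamma$ being incompressible in $\Gamma\setminus N$, noted in the construction. On the outside I put a hypothetical compressing disk in general position with $\partial h_K(P)$ and use innermost-disk reductions: inessential intersection circles can be pushed off, while essential ones would produce either a meridian disk of $P$ disjoint from $\Gamma$ (impossible, as every meridian of $P$ meets $\Gamma$ in essential disks, each of which meets $N$ at least twice, and so is not disjoint from $\Sigma$) or a compressing disk for $\partial\Gamma$ in $P\setminus\Gamma$ (impossible by incompressibility of $\partial\Gamma$ in $S^3\setminus\Gamma$, a feature of the handlebody knot $\Gamma=4_1$). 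Standard innermost-disk arguments then isotope $S$ so that $S\cap\partial\Sigma = \emptyset$. Since $h(K)\subset\Sigma$, the sphere lies in $\text{int}(\Sigma)$, and $\Sigma$ being a handlebody, $S$ bounds a 3-ball $B\subset\Sigma$. A further innermost-circle analysis of $S\cap D'$, exploiting that $D'$ is essential and meets $N$ in exactly two points, yields $S\cap D'=\emptyset$; so $B$ lies entirely in one of the two solid tori $V_1,V_2$ into which $D'$ separates $\Sigma$, say $B\subset V_1$.

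Finally, choose a simple arc $\beta\subset D'$ joining the two points of $D'\cap N$ and disjoint from $B$. Under the canonical identification $V_1\cong T_1$, the closed-up knot $\tilde J_1 := (V_1\cap h(K))\cup\beta$ is a copy of $J$. The sphere $S$ meets $\tilde J_1$ in the same two points as $S\cap h(K)$, with $B\cap\tilde J_1 = B\cap h(K)$. Primeness of $J$ gives that one of the two arcs cut off by $S$ is boundary parallel. If the $B$-arc is boundary parallel in $B$, then so is $B\cap h(K)$, contradicting our choice of $S$. Otherwise, $\tilde J_1\setminus B$ is boundary parallel in $S^3\setminus B$; a further round of innermost-disk reductions, using incompressibility of $\partial V_1$ in $V_1\setminus\tilde J_1$ (from the meridional property) and in $S^3\setminus V_1$ (from the specific decomposition of $\Gamma = 4_1$ along $D'$), localizes the trivializing disk inside $V_1\setminus B$. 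Then $\tilde J_1$ sits inside a 3-ball in $V_1$, which is impossible since every meridian of $V_1\cong T_1$ meets $\tilde J_1$ at least twice.

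The main obstacle is the localization of the trivializing disk to $V_1$ in the last case; this hinges on incompressibility of $\partial V_1$ in $S^3\setminus V_1$, which I expect to extract from the specific geometry of $\Gamma=4_1$ and the way $D'$ splits it. Tracking the parity of $|S\cap N|$ through the innermost-disk reductions that remove $S\cap D'$ is another delicate point.
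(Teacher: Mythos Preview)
Your outline heads in the right direction but is considerably more elaborate than the paper's argument, and it leaves two genuine gaps.

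\textbf{What the paper does.} The paper first records a one--line lemma: there is \emph{no local knot} of $J_1\#J_2$ in $G$ (because each $J_i$ is prime, the tangle $(B_i,B_i\cap J_i)$ is essential, and the complementary arcs are trivial). With this in hand the proof is immediate. If $S\cap\partial h_K(\Gamma)=\emptyset$, then $S$ lies in the handlebody $h_K(\Gamma)\cong G$, bounds a ball there, and the arc of $h(K)$ inside that ball is unknotted by the ``no local knot'' fact---contradicting that $S$ decomposes $h(K)$. If $S\cap\partial h_K(\Gamma)\neq\emptyset$, an innermost disk $O\subset S$ is either unpunctured, in which case incompressibility of $\partial\Gamma$ on both sides lets one reduce $|S\cap\partial h_K(\Gamma)|$, or once--punctured, in which case $O$ is an essential disk of $h_K(\Gamma)$ meeting $h(K)$ exactly once, contradicting the property (already recorded in the construction) that every essential disk of $G$ meets $J_1\#J_2$ in at least two points. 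That is the whole proof.

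\textbf{Gaps in your version.} First, your ``standard innermost--disk arguments'' to obtain $S\cap\partial\Sigma=\emptyset$ do not address the once--punctured innermost disk. Incompressibility of $\partial\Sigma$ in $E(h(K))$ only disposes of \emph{unpunctured} innermost disks; if the two points of $S\cap h(K)$ fall into two distinct innermost pieces inside $\Sigma$ you are stuck. The paper closes this with the ``every essential disk meets $J_1\#J_2$ at least twice'' property, which you cite in your preamble but never invoke at this step.

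Second, your endgame is both unnecessary and, as you suspect, unjustified. Once $S$ bounds a ball $B\subset\Sigma$, the ``no local knot'' observation finishes immediately; there is no need to push $S$ off $D'$, close up to $\tilde J_1$, or analyse Case~B. Your Case~B hinges on $\partial V_1$ being incompressible in $S^3\setminus V_1$, but for the handlebody--knot $4_1$ the loop $l_1$ obtained by cutting along the chosen separating disk is typically unknotted (indeed, later in the paper the solid torus $h_K(L_1)$ is placed inside a ball), so that incompressibility fails. Note, however, that even if you reached Case~B, its conclusion---that the $B$--arc carries the full knot type of $J$---already says there is a local knot of $J_1\#J_2$ in $G$, which is exactly what the paper's preliminary observation rules out. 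Inserting that observation both fills your gap and obviates the whole $V_1$/$\tilde J_1$ detour.
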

\begin{proof}
First we observe that no ball in $G$ intersects $J_1\# J_2$ in a knotted arc, that is there is no local knot of $J_1\# J_2$ in $G$. As the knot $J_i$ is prime and the tangle $(B_i, B_i\cap J_i)$ is essential, and $(B_i^c, B_i^c\cap J_i)$ is defined by two trivial arcs, we have necessarily that there is no local knot of $J_i$ in $T_i$, and consequently there is no local knot of $J_1\# J_2$ in $G$.\\
Suppose $h(K)$ is a composite knot and consider a decomposing sphere $S$ for $h(K)$. If $S$ is disjoint from $\partial h_K(\Gamma)$ then we obtain a contradiction with the nonexistence of local knots of $J_1\# J_2$ in $G$. Then consider the intersection of $S$ with $\partial h_K(\Gamma)$ and assume that $|S\cap \partial h_K(\Gamma)|$ is minimal among all decomposing spheres for $h(K)$.\\
The sphere $S$ intersects $\partial h_K(\Gamma)$ in a collection of simple closed curves. Let $O$ be an innermost disk bounded by an innermost curve of $S\cap \partial h_K(\Gamma)$ in $S$. We have two possibilities: there is an innermost disk $O$ disjoint from $h(K)$ or an innermost disk $O$ that intersects $h(K)$ at a single point. If $O$ is disjoint from $h(K)$, as $\partial G$ is essential in $G-J_1\# J_2$ and $\partial \Gamma$ is essential in the exterior of $\Gamma$, then $\partial O$ bounds a disk in $\partial h_K(\Gamma)$. Using a ball bounded by this disk and $O$ we can isotope $S$ through $\partial h_K(\Gamma)$ in $E(h(K))$ reducing $|S\cap \partial h_K(\Gamma)|$, contradicting its minimality. If $O$ intersects $h(K)$ at a single point then $O$ is an essential disk in $h_K(\Gamma)$ intersecting $h(K)$ at a single point, which contradicts the fact that every essential disk of $G$ intersects $J_1\#J_2$ at least in two points (as observed before). Therefore, the knot $h(K)$ is prime. 
\end{proof}

Besides being prime, the knots $h(K)$ can be decomposed into two essential arcs by surfaces of genus higher than zero keeping the properties of Theorem 1 in \cite{Nogueira1} used in their construction.

\begin{prop}\label{two b}
For every knot $K$ the exterior of $h(K)$ has meridional essential surfaces of any positive genus and two boundary components. 
\end{prop}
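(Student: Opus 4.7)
The plan is to build meridional essential surfaces in $E(h(K))$ of arbitrary positive genus with two boundary components by transporting, through the construction producing $h(K)$, the meridional essential surfaces known to exist in $E(J)$ by the main theorem of \cite{Nogueira1}. For each $g \geq 1$, fix such a surface $F \subset E(J)$ of genus $g$ with two boundary components.

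First I would arrange $F$ to lie inside $T - J$. Using that $\partial T$ is essential in $T - J$ (established in the construction of $G$) and that $\partial F$ consists of meridians of $J$ lying in $T$, standard innermost-curve arguments on $F \cap \partial T$, together with innermost-curve arguments on $F \cap \partial B$ to control parts of $F$ meeting the ball $B$, isotope $F$ into $T - J$ and away from $B$. Taking a copy $F_1$ of the resulting surface inside $T_1 - J_1 \subset G$, disjoint from $B_1$ and hence from the connect-sum disk $D \subset \partial T_1 \cap \partial T_2$, yields a meridional surface of genus $g$ with two boundary components inside $G - J_1 \# J_2$. Define the candidate surface
\[
F_K := h_K(e(F_1)) \subset E(h(K)).
\]

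To prove $F_K$ is essential, I would suppose for contradiction the existence of a compressing or $\partial$-compressing disk $\Delta$ for $F_K$, chosen to minimize $|\Delta \cap \partial h_K(\Gamma)|$. The innermost-curve analysis from the proof of Proposition \ref{prime} then applies: every innermost subdisk of $\Delta$ cut off by $\Delta \cap \partial h_K(\Gamma)$ has boundary that, if essential on $\partial h_K(\Gamma)$, compresses $\partial G$ inside $G - J_1 \# J_2$ or compresses $\partial \Gamma$ in the exterior of $\Gamma$, contradicting the essentiality of both surfaces; inessential intersection curves can be eliminated by isotopy, contradicting minimality. Hence $\Delta$ descends via $(h_K \circ e)^{-1}$ to a compression or $\partial$-compression of $F_1$ in $G - J_1 \# J_2$. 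A second round of innermost and outermost arguments against the separating disk $D$ of $G$ then pushes $\Delta$ into $T_1 - J_1$, where it descends via the identification $T_1 \cong T$ to a compression or $\partial$-compression of $F$ in $T - J \subset E(J)$, contradicting the essentiality of $F$.

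The main obstacle will be this second round of curve and arc arguments: since $D$ intersects $J_1 \# J_2$ transversely in two points, compressions must be tracked through twice-punctured disks rather than properly embedded disks, and the $\partial$-compression case requires additional care. The essentiality of the tangle $(B, B \cap J)$ and the fact that every essential disk of $G$ meets $J_1 \# J_2$ in at least two points, as noted before Proposition \ref{prime}, will be the key ingredients.
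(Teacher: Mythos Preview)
There is a genuine geometric gap in the first step, and it is not a detail that can be patched by the innermost/outermost arguments later on.

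The claim that $F$ can be isotoped into $T-J$ and ``away from $B$'' is the wrong direction: $B$ is the ball carrying the \emph{essential} tangle $(B,B\cap J)$, while $B^c\cap T$ is merely a cylinder (hence a ball) containing two parallel trivial arcs of $J$. No essential surface of positive genus with meridional boundary can be pushed entirely into that cylinder. In fact the paper records (from \cite{Nogueira1}) the explicit normal form: $S_{g;2}\cap(B^c\cap T)$ consists only of boundary--parallel tubes, $g-2$ of them around one string and $g$ of them around the other string $s$; the genus lives in $B$.

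Second, ``disjoint from $B_1$ hence disjoint from $D$'' is backwards. The connect-sum disk $D$ sits on $\partial T_1$ near a segment of the string $s_1\subset B_1^c\cap T_1$, not near $B_1$. So the region you want to avoid is exactly the part of $T_1$ where the tubes of $S_{g;2}$ around $s_1$ run, and those tubes run the full length of $s_1$. Whatever segment of $s_1$ is used for the connect sum is encircled by these $g$ tubes. After the connect sum the knot no longer passes through that segment (it detours through $D$ into $T_2$), so each tube acquires an honest compressing disk in $T_1-(J_1\#J_2\cap T_1)$, and your $F_1$ is compressible in $G-J_1\#J_2$. The later innermost-curve arguments against $\partial h_K(\Gamma)$ and $D$ cannot rescue this, because the compression already lives inside $T_1$.

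The paper's proof confronts precisely this obstruction: it does \emph{not} leave the surface inside $T_1$, but replaces the $g$ tubes around $s_1$ by $g$ tubes following the arc $J_2$ through $T_2$, producing a new surface $S'_{g;2}\subset G$ which genuinely interacts with both solid tori. Only after this rerouting do the innermost/outermost arguments you sketch (first against $D$, then against $\partial\Gamma$, then against $\partial h_K(P)$) go through. Your outline of those later steps is close to what the paper does, but the construction of the candidate surface needs this modification before they apply.
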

\begin{proof}
First we note that the meridional essential surfaces of any positive genus and two boundary components $S_{g; 2}$ in $E(J)$ are in the solid torus $T$. The surface $S_{g;2}$ intersects the cylinder $B^c\cap T$ at $g-2$ annuli parallel to one string of $B^c\cap T\cap J$ and $g$ annuli parallel to the other string of $B^c\cap T\cap J$, with the latter denoted by $s$. For each copy of $T$, $T_1$ and $T_2$, denote the respective copies of $s$ by $s_1$ and $s_2$. It is convenient here to think of the connected sum $J_1\# J_2$ as made along the arcs $s_i$. We isotope the arc $s_i$ into the boundary of $T_i$, and consider a regular neighborhood $R_i$ of this segment, disjoint from $J_i$ otherwise. We consider the surfaces $S_{g;2}$ in $T_1$ and assume that the annuli of $S_{g;2}\cap T_1\cap B_1^c$ parallel to $s_1$ are in $R_1$. After the connected sum between $J_1$ and $J_2$, assumed to be along the arcs $s_1$ and $s_2$ as described before, we replace these annuli in $R_1$ by $g$ annuli in the exterior of $R_2$ parallel to the resulting arc of $J_2$ in $T_2$ (that we also denote by $J_2$). In this way, we define a new surface $S'_{g;2}$, in the handlebody $G$, obtained from $S_{g;2}$ and also with genus $g$ and two boundary components.\\
As $S'_{g;2}\cap T_2$ is a collection of annuli cutting a regular neighborhood of $J_2$ in $T_2$, there is no compressing or boundary compressing disk for $S'_{g;2}$ in $T_2$. As $S_{g;2}$ is essential in $T_1-J_1$ there is no compressing or boundary compressing disk of $S'_{g;2}$ in $T_1$. Hence, if there is a compressing or boundary compressing disk of $S'_{g;2}$ in $G$ it intersects $D$. By an outermost arc innermost curve type of argument in the compressing disk with respect to its intersection with $D$ we obtain a contradiction with the essentiality of the annuli $S'_{g;2}\cap T_2$ in $T_2$ or the essentiality of $S_{g;2}$ in $T_1-J_1$. Hence, $S'_{g;2}$ is essential in the exterior of $J_1\# J_2$ in $G$.\\
Let now $F_{g;2}$ be $e(S'_{g;2})$. We will show that $F_{g;2}$ is essential in $E(N)$. Note that $N$ is, in particular, $h(K)$ for $K$ unknotted. Suppose there is a compressing or boundary compressing disk $Q$ for $F_{g;2}$ in $E(N)$. If $Q$ is disjoint from $\partial \Gamma$ we get a contradiction with $S'_{g;2}$ being essential in the exterior of $J_1\# J_2$ in $G$. Hence, $Q$ intersects $\partial \Gamma$. Suppose $|Q\cap \partial \Gamma|$ is minimal between all compressing or boundary compressing disks of $F_{g;2}$ in $E(N)$. As $F_{g;2}$ is disjoint from $\partial \Gamma$, the disk $Q$ intersects $\partial \Gamma$ at simple closed curves. Denote by $O$ an innermost disk defined by the curves of $Q\cap \partial \Gamma$ in $Q$. As $\partial \Gamma$ is irreducible in $E(\Gamma)$, the disk $O$ cannot be essential in $E(\Gamma)$. As $J_1\# J_2$ is essential in $G$, the disk $O$ cannot be essential in $\Gamma -N$. Therefore, $\partial O$ bounds a disk in $\partial \Gamma$ which, after an isotopy of $O$ through this disk, contradicts the minimality of $|Q\cap \partial \Gamma|$. Then, $F_{g;2}$ is essential in $E(N)$.\\
For a given non-trivial knot $K$ consider $h_K(P)$ and the knot $h(K)$. Denote by $F'_{g;2}$ the surface $h_K(F_{g;2})$ in $h_K(\Gamma)$. Assume there is a compressing or boundary compressing disk $Q'$ for $F'_{g;2}$ in $E(h(K))$. In case $Q'$ is disjoint from $\partial h_K(P)$ we get a contradiction with $F_{g;2}$ being essential in $E(N)$. Then, $Q'$ intersects $\partial h_K(P)$. Suppose that $|Q'\cap \partial h_K(P)|$ is minimal between all compressing or boundary compressing disks of $F'_{g;2}$. Denote also by $O'$ an innermost disk defined by $Q'\cap \partial h_K(P)$ in $Q'$. As $\partial h_K(P)$ is essential in $E(h_K(P))$, the disk $O'$ cannot be essential in $E(h_K(P))$. As $N$ is essential in $P$ (from the construction of $P$), the disk $O'$ also cannot be essential in $P-h(K)$. Then, $\partial O'$ bounds a disk in $\partial h_K(P)$ and, as before, we get a contradiction with the minimality of $|Q'\cap \partial h_K(P)|$.\\
In conclusion, for any knot $K$ the knot $h(K)$ has a meridional essential surface of any positive genus and two boundary components.  
\end{proof}

\section{Proof of Theorem \ref{main}}

In this section we use the satellite operation described in Definition \ref{satellite} and the knots from the main theorem of \cite{Nogueira1} to prove Theorem \ref{main}. First, we start with the following proposition where we show that for any knot with a meridional essential surface in its exterior there is a knot with meridional essential surfaces of the same genus and with an unlimited number of boundary components.

\begin{prop}\label{boundaries}
Let $K$ be a knot with a meridional essential surface of genus $g$ and $n$ boundary components.\\
Then, the knot $h(K)$ has a meridional essential surface of genus $g$ and $b$ boundary components for all even $b\geq 2n$.
\end{prop}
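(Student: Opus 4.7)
The plan is to build the required surface by starting with the meridional essential surface $S$ of genus $g$ and $n$ boundary components in $E(K)$ and then extending each of its meridional boundaries into the solid torus $h_K(P)$ by means of planar pieces that meet $h(K)=h_K(N)$ in a prescribed number of meridians. First I would observe that, because a meridian disk of $P$ meets $\gamma$ transversely in a single point lying on $l$, it crosses $\Gamma$ in a disk isotopic to $e(D)\subset L$; and since $D$ meets $J_1\#J_2$ in exactly the two points produced by the connected sum, a meridian disk of $P$ meets $N$ in exactly two points. Puncturing such a meridian disk at those two intersections yields a pair of pants $\Sigma^{(1)}\subset P-N$ with one boundary on $\partial P$ (a meridian of $P$) and two on the boundary of a regular neighborhood of $N$ (meridians of $N$). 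For each integer $m\geq 1$, take $m$ parallel copies of $\Sigma^{(1)}$ and band-sum their outer boundary circles along $m-1$ disjoint bands in $\partial P$; the result is a planar surface $\Sigma^{(m)}\subset P-N$ with one boundary on $\partial P$ and $2m$ boundaries that are meridians of $N$.

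Given $b\geq 2n$ even, write $b=2(n_1+\cdots+n_n)$ with each $n_i\geq 1$. In $h_K(P)$, place $n$ disjoint pushed-in copies of $h_K(\Sigma^{(n_1)}),\dots,h_K(\Sigma^{(n_n)})$ and, after a small isotopy, identify their outer boundaries with the $n$ meridional boundaries of $S$ on $\partial h_K(P)$. Because we are attaching planar surfaces along single boundary circles, the resulting compact surface $\Sigma^{*}=S\cup\bigcup_{i=1}^n h_K(\Sigma^{(n_i)})$ in $E(h(K))$ has genus $g$ and exactly $\sum 2n_i=b$ meridional boundary components of $h(K)$.

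The main work is proving that $\Sigma^{*}$ is essential in $E(h(K))$. I would argue this by a two-level innermost-circle / outermost-arc argument in the spirit of Propositions \ref{prime} and \ref{two b}. Suppose $Q$ is a compressing or boundary-compressing disk for $\Sigma^{*}$ and minimize first $|Q\cap\partial h_K(P)|$ and then $|Q\cap\partial h_K(\Gamma)|$. If $Q$ is disjoint from $\partial h_K(P)$, then it sits either in $E(h_K(P))=E(K)$, contradicting essentiality of $S$ in $E(K)$, or inside $h_K(P)$, contradicting essentiality of the pattern pieces $\Sigma^{(n_i)}$ in $P-N$; the latter I would prove separately by the same inner-level argument applied to $\partial h_K(\Gamma)$, using that $\partial\Gamma$ is essential in $E(\Gamma)$, that $\partial G$ is essential in $G-(J_1\#J_2)$, and the crucial fact established just before Proposition \ref{prime} that every essential disk of $G$ meets $J_1\#J_2$ in at least two points. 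If $Q$ does meet $\partial h_K(P)$, then an innermost closed curve of $Q\cap\partial h_K(P)$ bounds a disk that fails to be essential both in $E(h_K(P))$ and in $h_K(P)-h(K)$, hence bounds a disk in $\partial h_K(P)$, contradicting minimality.

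The hardest part will be handling arcs of $Q\cap\partial h_K(P)$ in the boundary-compression case, since such arcs can connect boundary circles of $\Sigma^{*}$ coming from different building blocks (one from $S$ and one from a planar piece). An outermost arc of $Q\cap\partial h_K(P)$ yields a candidate boundary compression of either $S$ or of one of the $\Sigma^{(n_i)}$, and to exclude both I would need to track carefully which side of $\partial h_K(P)$ the outermost disk sits on and then invoke essentiality of the corresponding piece. If the direct bookkeeping becomes unwieldy, a cleaner alternative is to encode $S$ together with the $\Sigma^{(m)}$'s into a branched surface $B\subset E(h(K))$ that carries every $\Sigma^{*}$, and to verify that $B$ has no disks of contact nor monogons; then any surface fully carried by $B$ is automatically essential, and this would deliver all $b\geq 2n$ uniformly, matching the branched surface methods announced in the introduction.
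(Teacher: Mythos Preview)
There is a genuine gap in the construction of your planar pieces $\Sigma^{(m)}$. Band--summing $m$ parallel copies of the twice--punctured meridian disk $\Sigma^{(1)}$ along bands in $\partial P$ does \emph{not} produce a surface whose single outer boundary is a meridian of $P$. For the glued surface to be orientable, adjacent copies $\Sigma^{(1)}_i,\Sigma^{(1)}_{i+1}$ must carry opposite orientations (they sit as the top and bottom of a product region joined by a band in its lateral annulus), so the outer boundary represents $\sum_i(-1)^{i+1}[\mu_P]$ in $H_1(\partial P)$: this is $0$ when $m$ is even and $[\mu_P]$ when $m$ is odd. Thus for $m$ even the outer curve is inessential on $\partial P$ and cannot be matched with a meridional boundary of $S$; in particular you cannot realise every partition $b=2(n_1+\cdots+n_n)$ this way. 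Even for $m$ odd the piece $\Sigma^{(m)}$ is built from parallel copies of a single disk joined by boundary bands, and such a surface admits obvious boundary compressions in $P-N$ across the product regions between consecutive copies, so the ``essentiality of the pattern pieces $\Sigma^{(n_i)}$ in $P-N$'' that you invoke is not available.

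The paper circumvents this by exploiting the nontrivial embedding of the handlebody--knot $\Gamma=4_1$ in $P$ rather than banding parallel disks. There are \emph{two} isotopy types of punctured meridian disk of $P$ relative to $\Gamma$ (type--$1$, meeting $N$ twice, and type--$2$, meeting $N$ four times); capping the $n$ boundaries of $S'$ with a mix of these already gives all boundary counts in $[2n,4n]$, and essentiality is handled by a direct innermost/outermost argument exactly as you sketch. For $b>4n$ the paper does pass to a branched surface, but the extra sector is not a band between parallel disks: it is an annulus $A$ that runs once through the complement of $\Gamma$ in $P$, so that the incompressibility check for $\partial_hN(B)$ genuinely uses that $\Gamma$ is a \emph{knotted} handlebody and that the arcs $J_1,J_2$ are knotted. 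Your outlined branched surface, built only from $S$ and (compressible) banded disks, would fail precisely the horizontal--boundary incompressibility condition (ii), not merely the disk--of--contact/monogon checks you list. The missing idea is that producing arbitrarily many meridional boundaries on a single meridian of $P$ requires routing the surface through the knotted part of $\Gamma\subset P$, and this is exactly what the annulus $A$ in the paper accomplishes.
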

\begin{proof}
Let $S$ be a closed surface of genus $g$ which $K$ intersects at $n$ points, corresponding to a meridional essential surface of genus $g$ and $n$ boundary components in $E(K)$, as in the statement. 
With the association of $h_K(P)$ with a regular neighborhood of $K$, we denote by $S'$ the meridional essential surface obtained from $S$ in the exterior of $h_K(P)$. Each boundary component of $S'$ bounds a meridian disk in $h_K(P)$. We consider two types of meridian disks of $h_K(P)$, with respect to $h_K(\Gamma)$,  as it intersects $h_K(\Gamma)$ at one separating disk or at two separating disks in $h_K(L)$. We refer to a meridian disk of $h_K(P)$ as \textit{type-$1$} in case it intersects $h_K(\Gamma)$ at one disk, separating $h_K(\Gamma)$ into two components. We refer to a meridian disk as of \textit{type-$2$} in case it intersects $h_K(\Gamma)$ at two disks, separating $h_K(\Gamma)$ into three components: one being a cylinder in $h_K(L)$, and each of the other two containing either $h_K(L_1)$ or $h_K(L_2)$.\\

\noindent In what follows we construct the surfaces used to prove the statement of this proposition. We define $S_{n+i}$, $i=0, 1, \ldots, n$, as a surface obtained from $S'$ by capping off its boundaries with $i$ meridians of type-$2$ and $n-i$ meridians of type-$1$. Hence, $S_{n+i}$ has genus $g$ and, in the exterior of $h_K(N)$, has $2(n+i)$ boundary components.\\ 
To proceed, we consider the surface $S_{2n}$, that intersects $h_K(P)$ at $n$ meridian disks of type-$2$, $D_1, \ldots, D_n$, ordered by index, such that $D_1\cup D_n$ cuts a cylinder from $h_K(P)$ containing all the other disks $D_i$. Let $D$ be a meridian disk of type-$1$. We assume that $D\cup D_1$ cuts a cylinder $Q_{L_1}$ from $h_K(P)$ containing $h_K(L_1)$, and $D_n\cup D$ cuts a cylinder $Q_{L_2}$ from $h_K(P)$ containing $h_K(L_2)$. Let $\partial^* Q_{L_2}$ be the annulus of intersection of $\partial Q_{L_2}$ with $\partial h_K(P)$. Let $O$ be the annulus of intersection of $Q_{L_1}$ with $\partial h_K(\Gamma)$ (the component that is disjoint from $h_K(L_1)$). Let $A$ be the annulus obtained by the union of $O$ with $D\cap E(h_K(\Gamma))$ and also with $\partial^* Q_{L_2}$. We define the surface $S_{2n+j}$, $j\geq 1$, as follows. Start with the surface $S_{2n}$. We consider $j$ meridians of type-$2$ in sequence after $D_n$ and denoted $D_{n+1}, D_{n+2}, \ldots, D_{n+j}$. We consider also $j$ copies of $A$ in $h_K(P)$ denoted, from the outside to the inside, by $A_1, A_2, \ldots, A_j$. First we consider the disk $O_p$ obtained by capping off the annulus $A_p$ by the disk that one of its boundary components bounds in $D_{n+p}$, for all $p=1, \ldots, j$. We continue by extending, in parallel to $h_K(L)$, the boundary of $O_p$ until it reaches $D_p$, and we surger the disk bounded by $O_p\cap D_p$ in $D_p$ by $O_p$. The resulting surface is the surface $S_{2n+j}$, which has genus $g$ and $2\times(2n+j)$ boundary components.

\begin{lem}\label{i<n}
The surfaces $S_{n+i}$, for $i\in \{0, 1, \ldots, n\}$, are essential in $E(h(K))$.
\end{lem}
\begin{proof}[Proof of Lemma \ref{i<n}]
Suppose a surface $S_{n+i}$, for some $i\in \{0, 1, \ldots, n\}$, is not essential and denote by $D$ a compressing disk or boundary compressing disk of $S_{n+i}$ in $E(h(K))$. Assume that $|D\cap \partial h_K(P)|$ is minimal among all compressing and boundary compressing disks of $S_{n+i}$.\\          
If $D$ intersects $\partial h_K(P)$ in some simple closed curve let $\delta$ be an innermost one in $D$ bounding an innermost disk $\Delta$. As $N$ is essential in $P$ the disk $\Delta$ cannot be essential in $h_K(P)$. As a non-trivial knot exterior in $S^3$ is boundary irreducible, the disk $\Delta$ cannot be essential in the exterior of $h_K(P)$. Hence, $\delta$ bounds a disk in $\partial h_K(P)$ and by an isotopy of $\Delta$ through this disk we can reduce $|D\cap \partial h_K(P)|$, contradicting its minimality. Then, $D$ doesn't intersect $\partial h_K(P)$ in simple closed curves.\\
Assume now that $D$ intersects $\partial h_K(P)$ in some arc. As $h(K)$ is disjoint from $\partial h_K(P)$ the arcs of $D\cap \partial h_K(P)$ have both ends in $D\cap S_{n+i}$, even when $D$ is a boundary compressing disk. Denote by $\delta$ an outermost arc of $D\cap \partial h_K(P)$ in $D$, cutting an outermost disk $\Delta$ from $D$ with boundary $\delta$ union with an arc in $D\cap S_{n+i}$. (This latter condition is not always true for all outermost disks as $\Delta$. In fact, when $D$ is a boundary compressing disk, an outermost disk $\Delta$ might include in its boundary an arc in $\partial E(h(K))$, but an outermost disk in its complement in $D$ has the desired property.) If $\delta$ has both ends in the same disk component $D_j$ of $h_K(P)\cap S_{n+i}$, by cutting and pasting along the disk cut by $\delta$ and $\partial D_j$ from $\partial h_K(P)$, we can reduce $|D\cap \partial h_K(P)|$, contradicting its minimality. Hence, $\delta$ has ends in different components of $h_K(P)\cap S_{n+i}$. As $h_K(P)\cap S_{n+i}$ is a collection of disks, $\Delta$ cannot be in $h_K(P)$. Consequently, $\Delta$ is in the exterior of $h_K(P)$ implying that $S$ is boundary compressible in $E(K)$, which contradicts its essentiality. Hence, the surfaces $S_{n+i}$, for $i\in \{0, \ldots, n\}$, are essential in $E(h(K))$.
\end{proof}

\begin{lem}\label{i>n}
The surfaces $S_{n+i}$, for $i>n$, are essential in $E(h(K))$.
\end{lem}
\begin{proof}[Proof of Lemma \ref{i>n}]
For the proof of this lemma we use branched surface theory based on work of Oertel \cite{Oertel} and Floyd and Oertel \cite{Floyd-Oertel} that we review concisely over the next paragraphs.\\
A \textit{branched surface} $B$ with generic branched locus in a $3$-manifold $M$ is a compact space locally modeled on Figure \ref{branchedmodel}(a).
\begin{figure}[htbp]

\labellist
\small \hair 0pt
\pinlabel (a) at 3 -5

\pinlabel (b) at 173 -5

\pinlabel (c) at 335 -5

\pinlabel  $\partial \text{ }N$ at 207 23
\pinlabel \tiny $h$ at 205 21

\pinlabel  $\partial \text{ }N$ at 155 40
\pinlabel \tiny $v$ at 153 37

\pinlabel $w_3=w_2+w_1$ at 383 50
\pinlabel $w_1$ at 407 14
\pinlabel $w_2$ at 391 30
\pinlabel $w_3$ at 368 30

\endlabellist
\centering
\includegraphics[width=0.9\textwidth]{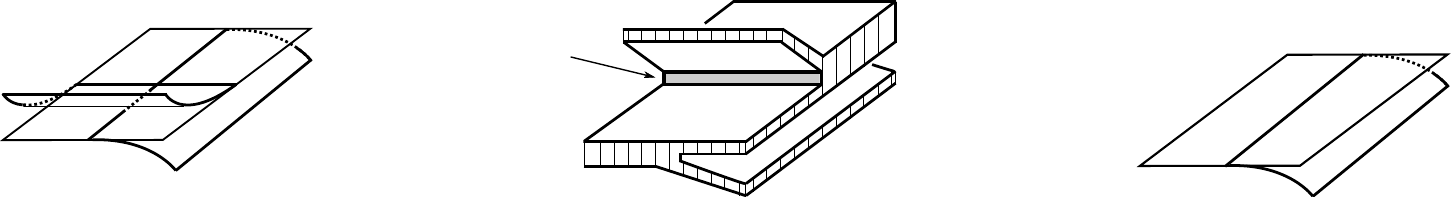}
\caption{: Local model for a branched surface, in (a), its regular neighborhood, in (b), and branch equations, in (c).}
\label{branchedmodel}
\end{figure}
We denote by $N(B)$ a fibered regular neighborhood of $B$ embedded in $M$, locally modelled on Figure \ref{branchedmodel}(b). The boundary of $N(B)$ is the union of three compact surfaces $\partial_h N(B)$, $\partial_v N(B)$ and $\partial M\cap \partial N(B)$, where a fiber of $N(B)$ meets $\partial_h N(B)$ transversely at its endpoints and either is disjoint from $\partial_v N(B)$ or meets $\partial_v N(B)$ in a closed interval in its interior. We say that a surface $S$ is \textit{carried} by $B$ if it can be isotoped into $N(B)$ so that it is transverse to the fibers. If we associate a weight $w_i\geq 0$ to each component on the complement of the branch locus in $B$ we say that we have an invariant measure provided that the weights satisfy branch equations as in Figure \ref{branchedmodel}(c). Given an invariant measure on $B$ we can define a surface carried by $B$, with respect to the number of intersections between the fibers and the surface. We also note that if all weights are positive we say that $S$ is carried with \textit{positive weights} by $B$, which is equivalent to $S$ being transverse to all fibers of $N(B)$.\\
A \textit{disk of contact} is a disk $D$ embedded in $N(B)$ transverse to fibers and with $\partial D$ in $\partial_v N(B)$. A \textit{half-disk of contact} is a disk $D$ embedded in $N(B)$ transverse to fibers with $\partial D$ being the union of an arc in $\partial M\cap \partial N(B)$ and an arc in $\partial_v N(B)$. A \textit{monogon} in the closure of $M-N(B)$ is a disk $D$ with $D\cap \partial N(B)=\partial D$ which intersects $\partial_v N(B)$ in a single fiber. (See Figure \ref{monogon}.)\\

\begin{figure}[htbp]

\labellist
\small \hair 0pt
\pinlabel (a) at 3 -7

\scriptsize
\pinlabel monogon at 100 30

\small
\pinlabel (b) at 173 -7

\scriptsize
\pinlabel  \text{disk of} at 240 40

\pinlabel \text{contact} at 240 34

\endlabellist
\centering
\includegraphics{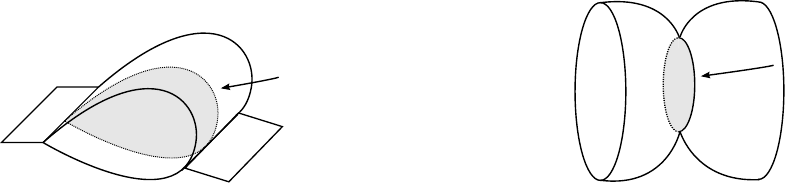}
\caption{: Illustration of a monogon and a disk of contact on a branched surface.}
\label{monogon}
\end{figure}

A branched surface $B$ embedded in $M$ is said \textit{incompressible} if it satisfies the following three properties:
\begin{itemize}
\item[(i)] $B$ has no disk of contact or half-disk of contact;
\item[(ii)] $\partial_h N(B)$ is incompressible and boundary incompressible in the closure of $M-N(B)$;
\item[(iii)] there are no monogons in the closure of $M-N(B)$.\\
\end{itemize}

Using the following theorem, by Floyd and Oertel in \cite{Floyd-Oertel}, we can determine if a surface carried by a branched surface is essential.

\begin{thm}[Floyd and Oertel, \cite{Floyd-Oertel}]\label{Floyd-Oertel}
A surface carried with positive weights by an incompressible branched surface is essential.
\end{thm}

\noindent Now we prove that the surfaces $S_{n+i}$, for $i>n$, are essential in $E(h(K))$ by showing that these surfaces are carried with  positive weights by an incompressible branched surface. Let us consider the surface $S'$ and denote by $b_1, b_2, \ldots, b_n$ its boundary components in consecutive order in $\partial h_K(P)$. Denote by $Q_j$ the annulus component of $\partial h_K(P)-b_1\cup \cdots\cup b_n$ bounded by $b_j\cup b_{j+1}$. We consider the union of $S'$, the annuli $Q_j$, $j=1, \ldots, n-1$, the annulus $A$ and a type-$2$ meridian $D_1$ of $h_K(P)$ with boundary $b_1$, as in the construction of $S_{2n}$, and denote the resulting space by $B$. We smooth the space $B$ on the intersection of $S'$, $Q_j$, $A$ and $D_1$ as explained next. For each annulus $Q_j$: isotope the boundary in $b_j$ into the exterior of $h_K(P)$ and smooth it towards $b_j$; also, smooth the boundary in $b_{j+1}$ towards the exterior of $h_K(P)$. We also smooth the boundary of $D_1$, that is $b_1$, with $S'$. With respect to the annulus $A$, we smooth its boundary in $D_1$ towards $b_1$, and we isotope its boundary in $b_n$ slightly into the exterior of $h_K(P)$ and smooth it towards $b_n$. In Figure \ref{branchedsurface} we have a schematic representation of the branched surface $B$.\\

\begin{figure}[htbp]
\labellist
\tiny \hair 0pt
\pinlabel $B$ at  70 420
\pinlabel $D_1$ at  75 260
\pinlabel $Q_1$ at  200 355
\pinlabel $Q_3$ at  740 355
\pinlabel $Q_{n-1}$ at  1300 355
\pinlabel $Q_2$ at  600 170
\pinlabel $A$ at  920 95
\pinlabel $h_K(\Gamma)$ at  1440 198
\pinlabel $h_K(L_1)$ at  316 184

\endlabellist

\centering
\includegraphics[width=\textwidth]{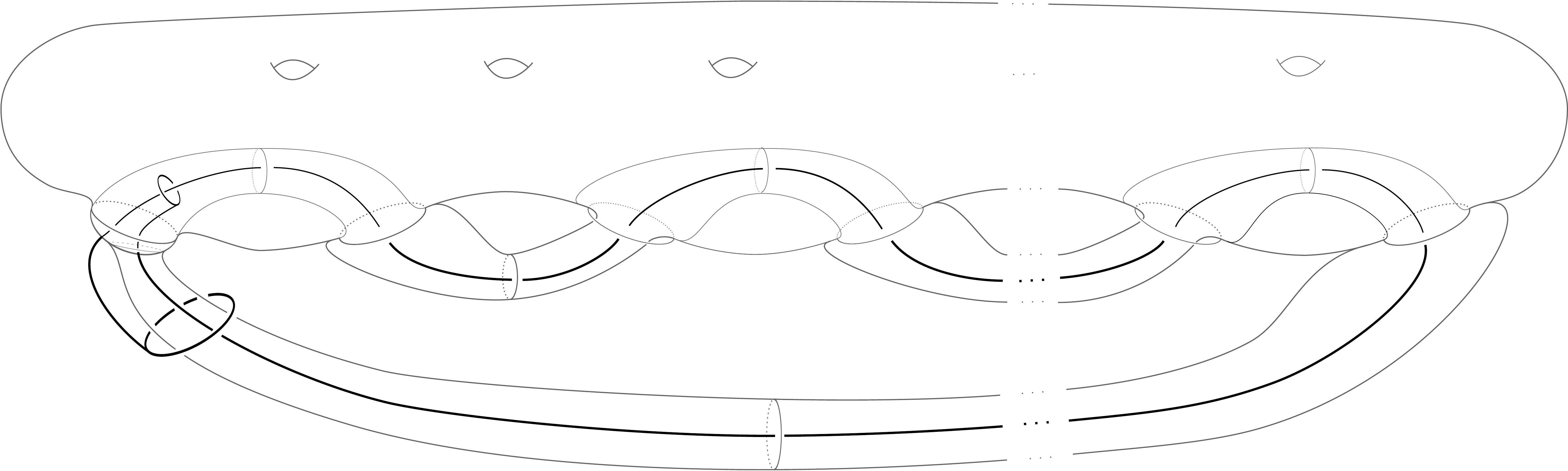}
\caption{: Schematic representation of the branched surface $B$.}
\label{branchedsurface}
\end{figure}  

Let us denote by $D_1^a$ the disk bounded by $D_1\cap A$ in $D_1$ and by $D_1^b$ the annulus defined by $D_1-D_1^a$. From the construction, the space $B$ is a branched surface with sections denoted naturally by $S'$, $Q_j$ for $j=1, \ldots, n-1$, $A$, $D_1^a$ and $D_1^b$, as illustrated in Figure \ref{branchedsurface}. We denote a regular neighborhood of $B$ by $N(B)$. The surface $S_{n+i}$ is carried with positive weights by $B$ together with the invariant measure on $B$ defined by $w_{S'}=1$, $w_{Q_j}=n-j+i$, $w_A=i$, $w_{D_1^a}=n$ and $w_{D_1^b}=n+i$, on the sections $S'$, $Q_j$, $A$, $D_1^a$, $D_1^b$, respectively. To prove that $S_{n+i}$, $i>n$, is essential in the exterior of $h(K)$ we show that $B$ is an incompressible branched surface and use Theorem \ref{Floyd-Oertel}.\\
The space $N(B)$ decomposes $E(h(K))$ into three components: a component cut from $E(h(K))$ by $S'$ and the annuli $Q_j$ with odd index that we denote by $E_1$; a component cut from $E(h(K))$ by $S'$, the annuli $Q_j$ with even index and the annulus $A$ that we denote by $E_2$; a component cut from $E(h(K))$ by $S'$, $D_1^a$, $D_1^b$, the annuli $Q_j$, $j=1, \ldots, n-1$, and $A$, that we denote by $E_p$.\\
As $E_1$ is disjoint from $\partial E(h(K))$ there are no boundary compressing disks for $\partial_h N(B)$ in $E_1$. In $\partial E_1$, the components of $\partial_v N(B)$ correspond to annuli associated to the boundary of $Q_j$ in $b_j$, for $j$ odd. Hence, if $\partial_h N(B)$ has a compressing disk in $E_1$, as it is disjoint from $\partial_v N(B)$, we can isotope its boundary into $S'$, contradicting $S'$ being essential in $E(K)$. On the other hand, a monogon disk in $E_1$ would have boundary defined by an arc in some $Q_j$ and an arc in $S'$, being a boundary compressing disk for $S'$ in $E(K)$, contradicting again $S'$ being essential.\\
In $\partial E_2$, the components of $\partial_v N(B)$ correspond to annuli associated to the boundary of $Q_j$ in $b_j$, for $j$ even, and to the annulus associated to the boundary of $A$ in $b_n$. As in the case for $E_1$, if there is a compressing disk for $\partial_h N(B)$ in $E_2$ then we get a contradiction with $S'$ being essential in $E(K)$. If there is a monogon disk in $E_2$ it would have boundary defined by an arc in some $Q_j$ or in $A$ and an arc in $S'$, defining a boundary compressing disk for $S'$ in $E(K)$, and contradicting again $S'$ being essential in $E(K)$. If there is a boundary compressing disk for $\partial_h N(B)$ in $E_2$ then an arc of such a disk boundary can be assumed to be the arc $s_1$ defined by $h_K(L_1)\cap h(K)$. The solid torus $h_K(L_1)$ is in a ball in $E_2$ intersecting $S_{n+i}$ in $D_1$, and $s_1$ has the isotopy type of the knotted arc $J_1$ in this ball. The existence of a boundary compressing disk of $\partial_h N(B)$ in $E_2$ with $s_1$ in the boundary contradicts $J_1$ being a knotted arc.\\
The component $E_p$ defines together with $E_p\cap h(K)$ a $3$-string tangle defined by a knotted arc $s_2$, with the pattern of $J_2$ in $h_K(L_2)$ and two parallel unknotted arcs in $h_K(L)$, denoted by $t_1$ and $t_2$. There is only one component of $\partial_v N(B)$ in $\partial E_p$ and it corresponds to the boundary of $A$ in $D_1$, denoted by $a$. The end points of each $t_i$, $i=1, 2$, in $E_p$ are separated by $a$ in $\partial E_p$, and the ends of $s_2$ are in the same disk bounded by $a$ in $\partial E_p$, say $D_a$. We denote the other disk bounded by $a$ in $\partial E_p$ by $D_a'$. As $a$ is separating in $\partial E_p$ there are no monogons of $\partial N(B)$ in $E_p$, and the boundary of a compressing or boundary compressing disk for $\partial_h N(B)$ in $E_p$ intersects $\partial_h N(B)$ only at $D_a$. If there is a boundary compressing disk then we can assume the arc $s_2$ is on its boundary, contradicting $s_2$ being knotted. If there is a compressing disk then it separates $s_2$ from $t_1\cup t_2$ implying that $\Gamma$ is trivial in $P$, which is a contradiction with $\Gamma$ being knotted (more exactly, the handlebody knot $4_1$ as in \cite{hknot}). This finishes the proof that $B$ is an incompressible branched surface and, consequently, from Theorem \ref{Floyd-Oertel} we also have that $S_{n+i}$ is essential in $E(h(K))$, for $i>n$. 
\end{proof}

From Lemmas \ref{i<n} and \ref{i>n} we obtain the statement of the proposition as the surfaces $S_{n+i}$, $i\geq 0$, are essential in $E(h(K))$ and $S_{n+i}$ has genus $g$ and $2n+2i$ boundary components. 
\end{proof}

This proposition offers a base for the proof of Theorem \ref{main} as follows.

\begin{proof}[Proof of Theorem \ref{main}]
Consider an infinite collection of knots $C_i$, $i\in \mathbb{N}$, as in the main theorem in \cite{Nogueira1}, that is where each of which has in its exterior a meridional essential surface $S_g$ for every genus $g\geq 0$ and two boundary components. Using the satellite operation defined in the previous section, for each knot $C_i$ we define the knot $h(C_i)$ that we denote by $K_i$. Hence, each knot $K_i$ has in its exterior, from Proposition \ref{two b}, a meridional essential surface of any positive genus and two boundary components and, from Proposition \ref{boundaries}, a meridional essential surface $S_{g;2n}$ of any genus $g$ and $2n$ boundary components for all $n\geq 2$. From Proposition \ref{prime} and Remark \ref{infinite}, the knots $h(K_i)$ are prime and pairwise distinct, and together with examples obtained after their connected sum with a non-trivial knot we complete the proof of the statement of the theorem.
\end{proof}

We now show, by proving Theorem \ref{hyperbolic}, that hyperbolic spaces can also have meridional essential surfaces of independently unbounded genus and number of boundary components.

\begin{proof}[Proof of Theorem \ref{hyperbolic}]
Consider a knot $K$ as in the statement of Theorem \ref{main}. That is, $E(K)$ contains meridional essential surfaces $S_{g;b}$ of any genus $g$ and (even) number $b$ of boundary components.
We will show that from $K$ we can construct a $3$-manifold with a hyperbolic knot whose exterior contains meridional essential surfaces $F_{g';b'}$ of genus and number of boundary components greater than or equal to $g$ and $b$, respectively.\\
From Myers \cite{Myers}, there is a null-homotopic knot $J\subset E(K)$ with hyperbolic exterior. Consider $E(J\cup K)$ and do $\frac{1}{r}$-Dehn filling on $J$ to produce a hyperbolic knot $K_r$, in a $3$-manifold $M_r$ not necessarily $S^3$ (that being the case only when $J$ is the unknot). 
As in the proof of Proposition 3.2 of \cite{Boileau-Wang} by Boileau-Wang \cite{Boileau-Wang}, there is a degree-one map $f: E(K_r)\rightarrow E(K)$, where $E(K_r)$ is the exterior of $K_r$ in $M_r$. From the construction of the degree-one map $f$, as in the proof of Proposition $3.2$ on \cite{Boileau-Wang}, we can homotope $f$ to be transverse to $S_{g;b}$ by making the immersed disk bounded by $J$ in $E(K)$, used to define $f$, transverse to $S_{g;b}$. After this homotopy of $f$, if necessary, we have that $F_{g';b'}=f^{-1}(S_{g;b})$ is a $2$-dimensional submanifold of $E(K_r)$. The restriction map $f: F_{g';b'}\rightarrow S_{g;b}$ is a degree-one map, by definition of degree-one map. Hence, by the work of Edmonds \cite{Edmonds}, it is a pinch map: there is a compact connected submanifold $F\subset F_{g';b'}$ with no more than one component of its boundary being a simple closed curve in the interior of $F_{g';b'}$, such that $f$, restricted to $F_{g';b'}$, is homotopic to the quotient map $F_{g';b'}\rightarrow F_{g';b'}/F$. In particular, this means that the genus of $F_{g';b'}$ is higher than the one of $S_{g;b}$. On top of this, as $f$ is the identity near $\partial E(K_r)$, the number of boundary components of $F_{g';b'}$ is the same as $S_{g;b}$.\\
In case $F_{g';b'}$ is incompressible, we have completed the proof. Otherwise, let $D$ be a compressing disk of $F_{g';b'}$ in $E(K_r)$. As $f:F_{g';b'}\rightarrow S_{g;b}$ is a pinch map as described above, $f(D)$ is an immersed disk in $E(K)$. In case $\partial D$ is in the pinched region of $F_{g';b'}$ by $f$, then $\partial D$ is mapped to a point of $S_{g; b}$. Hence, we can compress $F_{g';b'}$ by $D$ obtaining a surface that is still mapped by a degree-one map into $S_{g;b}$. We keep compressing until there are no more compressing disks with boundary in the pinched region. In case the induced homomorphism $f_*:\pi_1(F_{g';b'})\rightarrow \pi_1(S_{g;b})$ takes the class of $\partial D$ to the identity of $\pi_1(S_{g;b})$, we can homotope $\partial D$ into the pinched region of $F_{g';b'}$ by $f$, and repeat the previous argument. In case the class of $\partial D$ is not in the kernel of $f_*$, from the commutativity of the diagram
$$\xymatrix{\pi_1(F_{g';b'})\ar[d]_{j_*}\ar[r]^{f_*}&\pi_1(S_{g;b})\ar[d]^{i_*}\\
\pi_1(E(K_r))\ar[r]^{f_*}&\pi_1(E(K))}$$
the kernel of $i_*$ contains the class of $f_*([\partial D])$ and is non-trivial. By the Loop Theorem, there is a compressing disk of $S_{g;b}$ in $E(K)$, which is a contradiction. Hence, $F_{g';b'}$ is essential in $E(K_r)$. 
\end{proof}     


\section{Acknowledgement}
The author thanks Alan Reid for suggesting the idea of proof of Theorem \ref{hyperbolic}.

\end{document}